\theoremstyle{plain}
\newtheorem*{theorem*}{Theorem}
\newtheorem*{conjecture1}{Conjecture 1}
\newtheorem*{conjecture2}{Conjecture 2}
\newtheorem{definition}{Definition}
\newtheorem{theorem}{Theorem}
\newtheorem{lemma}{Lemma}
\newcommand{\herm}[2]{h_{#1, #2}}
\title[A new proof of Atiyah's conjecture for $4$ points]{A new proof of Atiyah's conjecture on configurations of four points}
\author{Joseph Malkoun}
\begin{document}
	
\begin{abstract} In Surveys in Differential Geometry, Volume 7, published in 2002 (\cite{Ati-2000}) and Philosophical Transactions of the Royal Society A, Volume 359, published in 2001 (\cite{Ati-2001}), Sir Michael Atiyah introduced what is known as the Atiyah problem on configurations of points, which can be briefly described as the conjecture that the $n$ polynomials (each defined up to a phase factor) associated geometrically to a configuration of $n$ distinct points in $\mathbb{R}^3$ are always linearly independent. The first ``hard'' case is for $n = 4$ points, for which the linear independence conjecture was proved by Eastwood and Norbury in Geometry \& Topology (2), in 2001 (\cite{EasNor2001}).
	
We present a new proof of Atiyah's linear independence conjecture on configurations of four points, i.e. of Eastwood and Norbury's theorem. Our proof consists in showing that the Gram matrix of the $4$ polynomials associated to a configuration of $4$ points in Euclidean $3$-space is always positive definite. It makes use of $2$-spinor calculus and the theory of hermitian positive semidefinite matrices.
\end{abstract}

\maketitle

\section{Introduction}

Inspired by Michael Berry and Jonathan Robbins's approach in \cite{BR1997} to understand the spin-statistics theorem quantum mechanically (and, essentially, geometrically!), Michael Atiyah proposed in a series of related papers in 2000-2001 (\cite{Ati-2000}, \cite{Ati-2001}) a geometric construction which associates smoothly to each configuration of $n$ distinct points in $\mathbb{R}^3$, $n$ complex polynomials of degree at most $n-1$, each defined up to a complex scalar factor only. Atiyah conjectured that any set of $n$ polynomials obtained via this construction is linearly independent over $\mathbb{C}$. 

Atiyah proved linear independence for the case of $n = 3$ points in various ways (cf \cite{Ati-2000} and \cite{Ati-2001} for example), with the $n = 2$ case being trivial. Moreover, in \cite{Ati-2001}, Atiyah  defined a normalized determinant $D$ of the $n$ polynomials in this geometric construction. Numerical calculations performed by Atiyah and Sutcliffe in \cite{Ati-Sut-2002} indicated that $|D| \geq 1$ for any configuration of $n$ distinct points for $n$ up to (at least) $32$. So it was natural for the authors to conjecture that this inequality held for any $n \geq 2$. This was known as Conjecture 2, with Conjecture 1 referring to the linear independence conjecture. Clearly, Conjecture 2 implies Conjecture 1. Atiyah and Sutcliffe also made an even stronger conjecture which they referred to as Conjecture 3 (which implies Conjecture 2), but we shall not discuss it in this article.

Eastwood and Norbury in \cite{EasNor2001} proved the linear independence conjecture (Conjecture 1) for the case of $n = 4$ points. This paper is remarkable because, after expanding $D$ using a computer algebra software (Maple) and obtaining an expression for $D$ involving about $200$ terms, the authors were able to express $D$ in terms of geometric quantities which are obviously nonnegative (triangle inequalities, volumes and so on). They even came close to showing conjecture $2$ for $n = 4$ (they showed that $|D| \geq \frac{15}{16}$).

Building on Eastwood and Norbury's work, Bou Khuzam and Johnson proved in 2014 Conjecture 2 (and even Conjecture 3) for $n = 4$ in \cite{KhuJoh2014} by essentially setting up a linear program and solving it using a computer. At around the same time, Dragutin Svrtan gave his arguments for Conjecture 2 (and Conjecture 3) also for the $n = 4$ case in a talk at the 73-rd ``Seminaire Lotharingien de Combinatoire''. His methods also build on Eastwood and Norbury's formula for the expansion of the normalized determinant for the $n = 4$ case (for which he also provides a human-readable proof).

The methods used to attack the $n = 4$ case rely upon the ``brute force'' expansion of the determinant. One may possibly prove the $n = 5$ case in a similar way after a lot of work (and computers with sufficient processing power), but such methods are obviously limited and do not seem to offer much help with the general case. One may perhaps hope to find a pattern in these formulas for low values of $n$, but so far, such efforts have not proved successful.

It seems clear to the author that one needs to at least supplement Eastwood and Norbury's work by finding a different proof for the $n = 4$ case which avoids the full expansion of Atiyah's determinant function and avoids the use of computer algebra software and the like, before tackling the general case. With this goal in mind, we show in this article that the Gram matrix of the $4$ polynomials associated to a configuration of $4$ distinct points in $\mathbb{R}^3$ is always positive definite, using $2$-spinor calculus and the theory of hermitian positive (semi-)definite matrices. This reproves the linear independence theorem by Eastwood and Norbury for the $n = 4$ case.

\section{Background: the Hopf map and $2$-spinors}

The Hopf map $h: S^3 \to S^2$ is a smooth map from $S^3$ onto $S^2$ with fibers being diffeomorphic to circles. The $3$-sphere $S^3$ can be described as follows.
\[ S^3 = \{ (u, v) \in \mathbb{C}^2 \,; \, |u|^2 + |v|^2 = 1 \}. \]
On the other hand, the $2$-sphere can be described as
\[ S^2 = \{ (\zeta, z) \in \mathbb{C} \times \mathbb{R} \,; \, 
|\zeta|^2 + z^2 = 1 \}.\]
The Hopf map is then defined by
\[ h(u, v) = (2 \bar{u} v, |v|^2 - |u|^2). \]
Note that, if $(\zeta, z) \in S^2$ with $z < 1$, then
\[ h^{-1}(\zeta, z) = \frac{e^{i \theta}}{\sqrt{2(1-z)}}\left(1 - z, \,\zeta\right)\]
where $\theta$ is real. On the other hand,
\[ h^{-1}(0, 1) = e^{i\alpha} (0, 1),\]
where $\alpha \in \mathbb{R}$.

A vector in $\mathbb{C}^2$ may be identified with a linear form on $\mathbb{C}^2$ using the complex symplectic form on $\mathbb{C}^2$ (unique up to a nonzero complex scalar factor), which we will denote by $\omega$. With respect to the standard basis of $\mathbb{C}^2$, $\omega$ gets represented by the following matrix
\begin{equation} \omega = \begin{pmatrix} 0 & 1 \\ -1 & 0 \end{pmatrix}. \label{omega} \end{equation}
Thus for example, a vector $\mathbf{\psi} = (\psi_0, \psi_1)^T \in \mathbb{C}^2$, where $T$ denotes the transpose, gets identified with
\begin{equation} \omega(\mathbf{\psi}, \, \mathbf{w}) = - \psi_1 u + \psi_0 v, \label{identification} \end{equation}
where $\mathbf{w} = (u, v)^T$. So, in particular, if $z < 1$, we have the identification
\[ h^{-1}(\zeta, z) \sim \frac{1}{\sqrt{2(1-z)}}((1-z)v - \zeta u)\]
(up to a phase factor). If we think of $u$ and $v$ as homogeneous coordinates on $\mathbb{P}^1(\mathbb{C})$, then switching to the inhomogeneous coordinate $v/u$, we see that the root of the previous linear form is $\zeta/(1 - z)$, which is nothing but the stereographic projection of the point $(x, y, z) \in S^2 \subset \mathbb{R}^3$, with $x$ and $y$ being the real and imaginary parts of $\zeta$, respectively.

While the original problem was formulated using stereographic projection and the Hopf map, it will be useful to consider a (very slightly) modified Hopf map instead, which is more naturally associated to the Pauli matrices.

We first remark that any hermitian $2 \times 2$ matrix is a linear combination with real coefficients of the $4$ Pauli matrices:
\begin{align*}
	\sigma_0 &= \begin{pmatrix} 1 & 0 \\ 0 & 1 \end{pmatrix} \\
	\sigma_1 &= \begin{pmatrix} 0 & 1 \\ 1 & 0 \end{pmatrix} \\
	\sigma_2 &= \begin{pmatrix} 0 & -i \\ i & 0 \end{pmatrix} \\
	\sigma_3 &= \begin{pmatrix} 1 & 0 \\ 0 & -1 \end{pmatrix}
\end{align*}
Note that $\sigma_0$ is the identity $2 \times 2$ matrix, which we also denote by $\mathbf{1}$.

If $\mathbf{w} = (u, v)^T \in S^3 \subset \mathbb{C}^2$, we first form
\[ \mathbf{w} \mathbf{w}^* = \begin{pmatrix} |u|^2 & u\bar{v} \\
	v \bar{u} & |v|^2 \end{pmatrix} \]
(with the $*$ denoting the conjugate transpose) which is an hermitian $2 \times 2$ matrix with trace equal to $1$. We can then write, in a unique way
\[ \mathbf{w} \mathbf{w}^* = \frac{1}{2}\left(\mathbf{1} - x \sigma_1 - y \sigma_2 - z\sigma_3 \right), \]
where $x$, $y$ and $z$ are real numbers. Noting that $\mathbf{w} \mathbf{w}^*$ has rank $1$, it thus follows that its determinant vanishes. Hence
\[ 0 = \det(\mathbf{w} \mathbf{w}^*) = \frac{1}{4}\left( 1 - x^2 - y^2 - z^2 \right). \]
In other words, $(x, y, z) \in S^2 \subset \mathbb{R}^3$. We now define the modified Hopf map $\tilde{h}: S^3 \to S^2 \subset \mathbb{R}^3$ as follows.
\[ \tilde{h}(u, v) = (x, y, z). \]
The formula for the inverse map, assuming $z < 1$, now reads
\[ \tilde{h}^{-1}(x, y, z) = \frac{e^{i \theta}}{2(1-z)}\left(1-z, -(x+iy)\right)^T \]
and
\[ \tilde{h}^{-1}(0, 0, 1) = e^{i \alpha}(0, 1)^T. \]

Using the modified Hopf map rather than the usual Hopf map does not affect the absolute value of the normalized determinant $D$ mentioned in the previous section (which will be discussed in section \ref{At-det}), so ultimately, it will be equivalent to using the usual Hopf map, as far as the Atiyah problem on configurations is concerned.

\section{A few useful formulas}

If $\mathbf{w}_i \in S^3 \subset \mathbb{C}^2$ with $\mathbf{w}_i = (u_i, v_i)^T$ for $i = 1, 2$, we then denote by $\langle -,\, -\rangle$ the standard hermitian inner product on $\mathbb{C}^2$, i.e.
\begin{equation} \langle \mathbf{w}_1, \, \mathbf{w}_2 \rangle = u_1 \bar{u}_2 + v_1 \bar{v}_2 = \operatorname{tr}(\mathbf{w}_1 \mathbf{w}_2^*) = \mathbf{w}_2^* \mathbf{w}_1. \label{herm} \end{equation}

We therefore deduce that
\begin{align*} & |\langle \mathbf{w}_1 ,\, \mathbf{w}_2 \rangle|^2 \\
= &\, \mathbf{w}_2^* \mathbf{w}_1 \mathbf{w}_1^* \mathbf{w}_2 \\
= &\, \operatorname{tr}(\mathbf{w}_1\mathbf{w}_1^*
\mathbf{w}_2\mathbf{w}_2^*) \\
= &\, \frac{1}{4}\operatorname{tr}\left[ \left( \mathbf{1} - \mathbf{x}_1.\vec{\sigma} \right) \left( \mathbf{1} - \mathbf{x}_2.\vec{\sigma} \right) \right] \\
= &\, \frac{1}{4}\operatorname{tr}\left[ \left(\mathbf{1} - \mathbf{x}_1.\vec{\sigma} - \mathbf{x}_2.\vec{\sigma} + (\mathbf{x}_1, \mathbf{x}_2) \mathbf{1}\right)\right]
\end{align*}
where $\mathbf{x}_i = (x_i, y_i, z_i)^T \in S^2 \subset \mathbb{R}^3$, which is the (modified) Hopf image of $\mathbf{w}_i$, for $i = 1, 2$,  $(-,\, -)$ denotes the standard Euclidean inner product on $\mathbb{R}^3$, $\vec{\sigma}$ is the $3$-vector of Pauli matrices indexed by $1$, $2$ and $3$ and the dot also denotes the Euclidean inner product of two $3$-vectors. Note that in the previous string of equalities, we have made use of the algebra of Pauli matrices. For example,
\[ \sigma_i^2 = \mathbf{1} \quad \text{for $i = 1, \ldots, 3$} \] 
and
\[ \sigma_1 \sigma_2 = -\sigma_2 \sigma_1 = i \sigma_3 \]
and cyclically permuted versions of this formula over $\{1, 2, 3\}$.
But
\[ \operatorname{tr}(\mathbf{x}_i.\vec{\sigma}) = 0, \quad \text{for $i = 1, 2$,}\]
so we obtain
\begin{equation} |\langle \mathbf{w}_1 \, \mathbf{w}_2 \rangle|^2 = \frac{1}{2} \left( 1 + (\mathbf{x}_1, \mathbf{x}_2) \right). \label{2-cycle} \end{equation}
\begin{definition} Given $\mathbf{w}_i$ and $\mathbf{x}_i$ as above (for $i = 1, 2$), we define
\[ \rho_{12} = \frac{1}{2} \left( 1 + (\mathbf{x}_1, \mathbf{x}_2) \right).\]
\end{definition}
It follows from the Cauchy-Schwarz inequality that $0 \leq \rho_{12} \leq 1$.

We can rephrase what we have proved, as follows.
\begin{equation} |\langle \mathbf{w}_1 \, \mathbf{w}_2 \rangle|^2 = \rho_{12}. \label{2-cycle-a} \end{equation}

Let $\mathbf{w}_i \in \mathbb{C}^2$ for $i = 1, \ldots, 3$. We have
\begin{align*} 
& \langle \mathbf{w}_1 ,\, \mathbf{w}_2 \rangle \langle \mathbf{w}_2 ,\, \mathbf{w}_3 \rangle \langle \mathbf{w}_3 ,\, \mathbf{w}_1 \rangle \\
= & \, \mathbf{w}_2^* \mathbf{w}_1 \mathbf{w}_1^* \mathbf{w}_3 \mathbf{w}_3^* \mathbf{w}_2 \\
= & \, \operatorname{tr}\left( \mathbf{w}_1 \mathbf{w}_1^* \mathbf{w}_3 \mathbf{w}_3^* \mathbf{w}_2 \mathbf{w}_2^* \right) \\
= & \, \frac{1}{8} \operatorname{tr}\left[(\mathbf{1} - \mathbf{x}_1.\vec{\sigma}) (\mathbf{1} - \mathbf{x}_3.\vec{\sigma}) (\mathbf{1} - \mathbf{x}_2.\vec{\sigma})\right] \\
= & \, \frac{1}{8} \operatorname{tr}\left[(1 + (\mathbf{x}_1, \mathbf{x}_2) + (\mathbf{x}_1, \mathbf{x}_3) + (\mathbf{x}_2, \mathbf{x}_3)) \mathbf{1} - (\mathbf{x}_1.\vec{\sigma}) (\mathbf{x}_3.\vec{\sigma}) (\mathbf{x}_2.\vec{\sigma}) \right], \\
\end{align*}
where we have used, in the last line, the fact that $\operatorname{tr}(\mathbf{x}_i.\vec{\sigma}) = 0$ for $i = 1, \ldots, 3$. But
\[ (\mathbf{x}_3.\vec{\sigma}) (\mathbf{x}_2.\vec{\sigma}) = (\mathbf{x}_2, \mathbf{x}_3) \mathbf{1} - i (\mathbf{x}_2 \times \mathbf{x}_3).\vec{\sigma}, \]
so that
\begin{align*} 
& \langle \mathbf{w}_1 ,\, \mathbf{w}_2 \rangle \langle \mathbf{w}_2 ,\, \mathbf{w}_3 \rangle \langle \mathbf{w}_3 ,\, \mathbf{w}_1 \rangle \\
= & \, \frac{1}{8} \operatorname{tr}\left[(1 + (\mathbf{x}_1, \mathbf{x}_2) + (\mathbf{x}_1, \mathbf{x}_3) + (\mathbf{x}_2, \mathbf{x}_3) + i \det(\mathbf{x}_1, \mathbf{x}_2, \mathbf{x}_3)) \mathbf{1}\right] \\
= & \, \frac{1}{4} \left(1 + (\mathbf{x}_1, \mathbf{x}_2) + (\mathbf{x}_1, \mathbf{x}_3) + (\mathbf{x}_2, \mathbf{x}_3) + i \det(\mathbf{x}_1, \mathbf{x}_2, \mathbf{x}_3)\right),
\end{align*}
where $\det(\mathbf{x}_1, \mathbf{x}_2, \mathbf{x}_3)$ is the determinant of the real $3 \times 3$ matrix having $\mathbf{x}_1$, $\mathbf{x}_2$ and $\mathbf{x}_3$ as its $3$ columns, in that order.
Using the $\rho$ notation, we have proved
\begin{equation} \begin{split} & \langle \mathbf{w}_1 ,\, \mathbf{w}_2 \rangle \langle \mathbf{w}_2 ,\, \mathbf{w}_3 \rangle \langle \mathbf{w}_3 ,\, \mathbf{w}_1 \rangle \\
= & \, \frac{1}{2} \left(-1 + \rho_{12} + \rho_{13} + \rho_{23}\right) + \frac{i}{4} \det(\mathbf{x}_1, \mathbf{x}_2, \mathbf{x}_3).
\end{split} \label{3-cycle}
\end{equation}

\section{The Atiyah problem on configurations}

Let $C_n(\mathbb{R}^3)$ be the configuration space of $n$ distinct points in $\mathbb{R}^3$. Given $\mathbf{x} = (\mathbf{x}_1, \ldots, \mathbf{x}_n) \in C_n(\mathbb{R}^3)$, for each pair of indices $i$, $j$ with $1 \leq i, j \leq n$ and $i \neq j$, we let
\[ \nu_{ij} = \frac{\mathbf{x}_j - \mathbf{x}_i}{\lVert \mathbf{x}_j - \mathbf{x}_i \rVert} \in S^2. \]
We choose for each such pair of indices $i$, $j$ a Hopf lift $\mathbf{w}_{ij} \in S^3 \subset \mathbb{C}^2$ of $\nu_{ij}$, by which we mean that $h(\mathbf{w}_{ij}) = \nu_{ij}$. Later on, we will be using the modified Hopf map rather than the Hopf map, but for the time being, we will describe the problem first using the usual Hopf map.
Using $\omega$ defined in \eqref{omega}, we identify each $\mathbf{w}_{ij}$ with a corresponding homogeneous linear form $p_{ij}(\mathbf{w})$ on $\mathbb{C}^2$ via
\[ p_{ij}(\mathbf{w}) = \omega(\mathbf{w}_{ij}, \mathbf{w}), \]
where $\mathbf{w} = (u, v)^T$ are the coordinates of a variable point in $\mathbb{C}^2$.
Given $i$, with $1 \leq i \leq n$, we form
\[ p_i(\mathbf{w}) = \prod_{j \neq i} p_{ij}(\mathbf{w}), \]
which is a complex homogeneous polynomial of degree $n - 1$ in two complex variables ($u$ and $v$). Note that each $p_i(\mathbf{w})$ is well defined only up to a phase factor.
\begin{conjecture1} Given $n$, the polynomials $p_i(\mathbf{w})$, for $i = 1, \ldots, n$, associated to any given configuration $\mathbf{x} \in C_n(\mathbb{R}^3)$, are linearly independent (over $\mathbb{C}$).\end{conjecture1}
Conjecture 1 is actually a theorem by M. F. Atiyah for $n = 3$ (cf. \cite{Ati-2000}, \cite{Ati-2001}) and was proved by Eastwood and Norbury for $n = 4$ in \cite{EasNor2001}, though it is open for the general $n > 4$ case. The case $n = 4$ is the first ``hard'' case though, as discussed in the introduction.

\section{The Atiyah determinant} \label{At-det}

Building on the description in the previous section, we may consider ($u$, $v$) as a basis of $(\mathbb{C}^2)^*$ (the complex dual space of $\mathbb{C}^2$). We may similarly consider
\[ (u^{n-1}, u^{n-2}v, \ldots, u^iv^{n-1-i}, \ldots, v^{n-1}) \]
as a basis of $\operatorname{Sym}^{n-1}(\mathbb{C}^2)^*$, i.e. of the $n-1$-st symmetric tensor power of $(\mathbb{C}^2)^*$. With respect to these two bases respectively, we may think of $p_{ij}(\mathbf{w})$, respectively $p_i(\mathbf{w})$, as a vector in $\mathbb{C}^2$, respectively $\mathbb{C}^n$. So it does make sense then to talk about $\det(p_{ij}(\mathbf{w}), p_{ji}(\mathbf{w}))$, which is the determinant of the complex $2 \times 2$ matrix whose column vectors are $p_{ij}(\mathbf{w})$ and $p_{ji}(\mathbf{w})$ in that order.

\begin{definition} Atiyah's normalized determinant function $D: C_n(\mathbb{R}^3) \to \mathbb{C}$ is defined by
\[ D(\mathbf{x}) = \frac{\det(p_1(\mathbf{w}), \ldots, p_n(\mathbf{w}))}{\prod_{1 \leq i < j \leq n} \det(p_{ij}(\mathbf{w}), p_{ji}(\mathbf{w}))}. \]
\end{definition}

It can be checked that $D$ is well defined. Indeed, both the numerator and denominator are homogeneous in the $p_{ij}(\mathbf{w})$, where $1 \leq i, j \leq n$ and $i \neq j$, of degree $1$ in each single $p_{ij}(\mathbf{w})$, so that a choice of different phase factors for the $p_{ij}(\mathbf{w})$ would lead to the same ratio and thus to a well-defined value for $D(\mathbf{x})$.

As a remark, one may define $D(\mathbf{x})$ alternatively as follows and this is the definition we will adopt in this article. At the step where one chooses Hopf lifts, one may assume that once a choice of Hopf lift $\mathbf{w}_{ij}$ of $\nu_{ij}$ was made, where $1 \leq i < j \leq n$, then $\mathbf{w}_{ji}$ is then taken to be as follows.
\[ \begin{pmatrix} u_{ji} \\ v_{ji} \end{pmatrix} = 
\begin{pmatrix*}[r]
	-\bar{v}_{ij} \\ \bar{u}_{ij}
 \end{pmatrix*}. \]
It can be easily checked that $\mathbf{w}_{ji}$ is then a Hopf lift of $\nu_{ji} = -\nu_{ij}$. The experts will recognize that the map sending $\mathbf{w}_{ij}$ to $\mathbf{w}_{ji}$ above is a quaternionic structure on $\mathbb{C}^2$ and is what the antipodal map in $\mathbb{R}^3$ (the so-called parity transformation in physics) corresponds to on the $2$-spinor level.
That the $2$ definitions of $D$ are equivalent is left as an exercise to the reader (it is partly based on the observation that if we follow the above prescription, then $\det(p_{ij}(\mathbf{w}), p_{ji}(\mathbf{w})) = 1$).

\begin{conjecture2} Given $n$, for any given configuration $\mathbf{x} \in C_n(\mathbb{R}^3)$, $|D(\mathbf{x})| \geq 1$.\end{conjecture2}
Conjecture 2 is actually a theorem by M. F. Atiyah for $n = 3$ and was proved for $n = 4$ by Bou Khuzam and Johnson in \cite{KhuJoh2014} (who also proved the stronger Conjecture 3). At around the same time, Dragutin Svrtan gave a talk at the 73-rd ``Seminaire Lotharingien de Combinatoire'' presenting his arguments for the Atiyah-Sutcliffe conjectures 1-3 for the $n = 4$ case, though they do not seem to be published (to the best of the author's knowledge). Conjecture 2 is, at the time of writing, open for the general $n > 4$ case.

As an important note, we will diverge from the standard description of the Atiyah problem on configurations in the following points:
\begin{enumerate}
\item We make use of the modified Hopf map $\tilde{h}$ rather than $h$, so that we have, for example,
\[ \tilde{h}(\mathbf{w}_{ij}) = \nu_{ij} \quad \text{($1 \leq i, j \leq n$, $i \neq j$). }\]
\item We do not ``dualize'' each $\mathbf{w}_{ij}$ using the complex symplectic form $\omega$. We also replace polynomial multiplication with the symmetric tensor product (over $\mathbb{C}$). Thus, for example, instead of considering $p_{ij}(\mathbf{w})$, we consider instead $p_{ij} = \mathbf{w}_{ij}$ (for $1 \leq i, j \leq n$ and $i \neq j$) and we define, for $i = 1, \ldots, n$, the following:
\[ p_i = \operatorname{Sym}_{j \neq i} p_{ij} \in \operatorname{Sym}^{n-1}(\mathbb{C}^2), \]
where $\operatorname{Sym}$ denotes the symmetric tensor product (over $\mathbb{C}$). The vector $p_i$ will play the role of $p_i(\mathbf{w})$.
\end{enumerate}

One may mimic the definition of $D$ above, making sure to use the above modifications. Using the modified Hopf map instead of the Hopf map has the effect of multiplying both numerator and denominator by the same sign factor, namely $(-1)^{\binom{n}{2}}$, so that the value of $D(\mathbf{x})$ remains the same. Similarly, using or not the matrix $\omega$ to dualize $\mathbf{w}_{ij}$ has no effect on the numerator, nor the denominator, in the definition of $D(\mathbf{x})$, since $\omega$ has determinant equal to $1$.

\section{The Gram matrix}

If $\mathbb{C}^2$ is equipped with the standard hermitian inner product \eqref{herm}, then this induces an hermitian inner product on $\operatorname{Sym}^m(\mathbb{C}^2)$, the $m$-th symmetric tensor power of $\mathbb{C}^2$, where $m$ is any given positive integer. More precisely, if $p = \mathbf{w}_1 \odot \cdots \odot \mathbf{w}_m$ and $p' = \mathbf{w}'_1 \odot \cdots \odot \mathbf{w}'_m$, where $\odot$ denotes the symmetric tensor product (which some people may call the symmetric Kronecker product), we then define
\[ \langle p, \, p' \rangle = \sum_{\sigma \in S_m} \langle \mathbf{w}_i, \, \mathbf{w}'_{\sigma(i)} \rangle. \]
Note that we do not include a normalization factor, though many authors may choose to include one.

We fix an integer $n \geq 2$. Given $\mathbf{x} \in C_n(\mathbb{R}^3)$, we let $p_1, \ldots, p_n$ be the corresponding vectors in $\operatorname{Sym}^{n-1}(\mathbb{C}^2)$, each defined up to a phase factor. We form the Gram matrix $H_n(\mathbf{x})$ (which we may sometimes simply write as $H_n$, if $\mathbf{x}$ is understood) of these $n$ vectors, namely
\[ H_n = (\langle p_i,\, p_j \rangle) , \quad \text{$1 \leq i,j \leq n$},\]
where $\langle -,\, - \rangle$ is the hermitian inner product induced on $\operatorname{Sym}^{n-1}(\mathbb{C}^2)$ by the standard hermitian inner product on $\mathbb{C}^2$, as described above.

We note however that, while $H_n$ itself is only defined up to conjugation by an $n \times n$ diagonal unitary matrix due to the phase ambiguity, its eigenvalues (with multiplicity) are well defined and so is its determinant. As a matter of fact, if one keeps track carefully of the norms of the vectors
\[ e_1^{\odot k} \odot e_2^{\odot n-1-k} \quad \text{($0 \leq k \leq n-1$)}\]
(where $e_1 = (1, 0)^T$ and $e_2 = (0, 1)^T$), which form a basis of $\operatorname{Sym}^{n-1}(\mathbb{C}^2)$ used in the definition of $D(\mathbf{x})$, one may show that
\begin{equation} \det(H_n(\mathbf{x})) = \left(\prod_{k=0}^{n-1} \lVert e_1^{\odot k} \odot e_2^{\odot n-1-k} \rVert^2 \right) |D|^2 = c_n |D|^2. \label{det_gram1} \end{equation}
where
\begin{equation} c_n = \prod_{k=0}^{n-1} (k!)^2. \label{det_gram2} \end{equation}

Being a Gram matrix, $H_n(\mathbf{x})$ is therefore always positive semidefinite. Moreover, $H_n(\mathbf{x})$ is positive definite iff its determinant is positive (since we already know that all its eigenvalues are real and nonnegative) iff the $n$ vectors $p_i$, $1 \leq i \leq n$, are linearly independent over $\mathbb{C}$, i.e. iff conjecture 1 holds.

Moreover, one may easily see using \eqref{det_gram1} and \eqref{det_gram2}, that conjecture 2 is equivalent to
\[ \det{H_n} \geq c_n. \]

The following is a reformulation of known results.
\begin{theorem} If $n \leq 4$, then the Gram matrix $H_n(\mathbf{x})$ is positive definite for any $\mathbf{x} \in C_n(\mathbb{R}^3)$. \end{theorem}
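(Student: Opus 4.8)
The plan is to dispose of the cases $n\le 3$ by hand and to concentrate the real work on $n=4$. For $n=1$ the Gram matrix is the scalar $(1)$. For $n=2$ we have $p_1=\mathbf{w}_{12}$ and $p_2=\mathbf{w}_{21}$ with $\nu_{21}=-\nu_{12}$, so \eqref{2-cycle-a} gives $|\langle p_1,p_2\rangle|^2=\tfrac12\bigl(1+(\nu_{12},\nu_{21})\bigr)=0$; since $\langle p_i,p_i\rangle=1$ this already shows $H_2=I_2$. For $n=3$ the entries of $H_3$ are $2\times2$ permanents of cross-Gram matrices of the $\mathbf{w}_{ij}$, and the identity $\langle\mathbf{w}_{ij},\mathbf{w}_{ji}\rangle=0$ — Hopf lifts of antipodal directions are orthogonal, by \eqref{2-cycle-a} — collapses each off-diagonal permanent to a single product; a short estimate of the resulting $3\times3$ determinant, or more cheaply Atiyah's theorem for $n=3$ (\cite{Ati-2000}, \cite{Ati-2001}), then gives $\det H_3>0$.

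So let $n=4$. Being a Gram matrix, $H_4$ is positive semidefinite, hence it suffices to prove $\det H_4>0$, i.e.\ that $H_4$ has trivial kernel. We would first make $H_4$ completely explicit. Each entry $\langle p_i,p_j\rangle$ is the permanent of the $3\times3$ matrix $M^{(ij)}=\bigl(\langle\mathbf{w}_{ia},\mathbf{w}_{jb}\rangle\bigr)$, and $\langle\mathbf{w}_{ij},\mathbf{w}_{ji}\rangle=0$ deletes two of its six monomials; since the permanent of a positive semidefinite matrix with unit diagonal is at least $1$, the diagonal of $H_4$ is in fact $\ge1$. The surviving monomials are products of two, three or four of the hermitian inner products $\langle\mathbf{w}_{ab},\mathbf{w}_{cd}\rangle$, which we evaluate by \eqref{2-cycle-a}, \eqref{3-cycle} and analogous higher ``cycle'' identities obtained from the Pauli algebra as in Section~3, together with the completeness relations $\mathbf{w}_{ij}\mathbf{w}_{ij}^{*}+\mathbf{w}_{ji}\mathbf{w}_{ji}^{*}=\mathbf{1}$. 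The outcome is a closed form for $H_4$ in terms of the numbers $\rho_{ij}=\tfrac12(1+(\nu,\nu))$ and the signed volumes $\det(\nu_{ia},\nu_{ib},\nu_{ic})$ of the configuration.

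The crux is then to recognise this explicit $H_4$ as positive definite without expanding $\det H_4$. The route we would take is to factor out the positive diagonal and to exhibit the resulting unit-diagonal matrix as a sum, or a Hadamard (Schur) product, of hermitian positive semidefinite matrices one of which is positive definite, and then to invoke the following elementary fact about hermitian positive semidefinite matrices: if $A\succeq0$ has strictly positive diagonal entries and $B\succ0$, then $A\circ B\succ0$. (Writing $A=\sum_k v_kv_k^{*}$ one has $A\circ B=\sum_k D_{v_k}BD_{v_k}^{*}$ with $D_{v_k}=\operatorname{diag}(v_k)$, so for $x\neq0$ the quantity $x^{*}(A\circ B)x$ is a sum of nonnegative terms; picking a coordinate $i$ with $x_i\neq0$, some $v_k$ has $v_{k,i}\neq0$ because $A_{ii}>0$, and the corresponding term is strictly positive.) The natural positive semidefinite building blocks here are the rank-one outer products produced by the leading monomials of the reduced permanents, together with the Gram matrices attached to the six orthonormal frames $\{\mathbf{w}_{ij},\mathbf{w}_{ji}\}$ of $\mathbb{C}^2$.

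We expect the main obstacle to be exactly this reorganisation. The dangerous terms are the imaginary, volume-type contributions $\det(\nu_{ia},\nu_{ib},\nu_{ic})$: it was precisely their presence that forced Eastwood and Norbury's ``sum of manifestly nonnegative terms'' to be produced by a computer-assisted expansion of $D$, and the whole point of a spinor-based argument is to show that the identities of Section~3 compel these terms to assemble into hermitian squares rather than having to be bounded term by term. Once that assembly is made explicit we read off $H_4\succ0$, whence $\det H_4=c_4|D|^2>0$ with $c_4=144$, and therefore the linear independence of $p_1,\dots,p_4$ — the $n=4$ case of Conjecture~1.
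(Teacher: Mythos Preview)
Your proposal is a plan rather than a proof, and the plan as stated would not close. You correctly anticipate that the permanent formulation, the orthogonality $\langle\mathbf{w}_{ij},\mathbf{w}_{ji}\rangle=0$, and Hadamard products are the right ingredients, but you are missing the two ideas that make the argument go through.

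First, the actual decomposition is not a single Hadamard product with one positive-definite factor, as you suggest; it is a \emph{sum} $H_4=\sum_{i=1}^4 A_i$ of four positive semidefinite $4\times4$ matrices, each of which is singular (each $A_i$ has an entire zero row and column). So your Schur-product lemma ``$A\succeq0$ with positive diagonal and $B\succ0$ imply $A\circ B\succ0$'' cannot be applied to $H_4$ globally. What the paper shows instead is that the nonzero $3\times3$ block of each $A_i$ factors as $\tilde{A}_i = 2\bigl(H^{(3)}_{\hat{\imath}}-\mathbf{1}\bigr)\ast H(\mathbf{w}_{\bullet i})$, and proves as a separate lemma that $H_3-\mathbf{1}\succeq0$ for \emph{any} three-point configuration. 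That lemma --- that the $n=3$ Gram matrix dominates the identity --- is the key step you have not identified; without it one does not even know that the $A_i$ are positive semidefinite. The paper then computes $\det\tilde{A}_i$ explicitly and shows it is strictly positive iff the three points $\{\mathbf{x}_j:j\neq i\}$ are non-collinear.

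Second, positive definiteness of $H_4$ is then obtained not from a single $A_i$ but by a covering argument: if the four points are not all collinear, one can choose two indices $i,j$ so that both complementary triples are non-collinear, whence $\tilde{A}_i$ and $\tilde{A}_j$ are positive definite on overlapping $3$-dimensional coordinate subspaces that together span $\mathbb{C}^4$; a vector in $\ker H_4$ must lie in $\ker A_i\cap\ker A_j=\{0\}$. The fully collinear case is handled separately. Your remark about the volume terms being the obstacle is also off the mark: in the paper's route they never need to be ``assembled into hermitian squares''; the Hadamard factorisation and the $H_3-\mathbf{1}\succeq0$ lemma absorb them automatically, and the only explicit determinant computed is the $3\times3$ one for $\tilde{A}_i$, where the volume contribution enters through the real part formula \eqref{3-cycle}.
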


We stress that, as a statement, this is only a reformulation of the fact that Atiyah's conjecture 1 is known to be true for $n \leq 4$ (proved by M. F. Atiyah for $n = 3$, cf \cite{Ati-2000}, \cite{Ati-2001}, and by Eastwood and Norbury for $n = 4$ in \cite{EasNor2001}). Our contribution is in providing a new proof for the $n = 4$ case (which is the first ``hard'' case) that does not require the full expansion of the $4 \times 4$ Atiyah determinant, nor the use of computer algebra software such as Maple, unlike the ``tour de force'' kind of proof found in \cite{EasNor2001}. The author hopes that this work will complement Eastwood and Norbury's work and may allow us to proceed further and possibly tackle the $n > 4$ case (ideally, at least).

\section{Proof of the $n = 3$ case}

In this section, we tackle the case of $n = 3$ points in $\mathbf{R}^3$. Given $\mathbf{x} \in C_3(\mathbb{R}^3)$, the corresponding Gram matrix $H_3(\mathbf{x})$ is given by
\begin{equation} H_3(\mathbf{x}) = 
\begin{pmatrix} 1 + |\herm{12}{13}|^2 & \herm{12}{23} \,\herm{13}{21} & \herm{12}{31} \,\herm{13}{32} \\
\herm{23}{12} \,\herm{21}{13}& 1 + |\herm{21}{23}|^2 & \herm{21}{32} \,\herm{23}{31} \\
\herm{31}{12} \,\herm{32}{13} & \herm{32}{21} \,\herm{31}{23} & 1 + |\herm{31}{32}|^2 \end{pmatrix}, \label{H3} \end{equation}
where $h_{ij, kl} = \langle \mathbf{w}_{ij},\, \mathbf{w}_{kl} \rangle$.

Let
\begin{align*} \mu_1 &= |\herm{12}{13}|^2 = 1 - |\herm{31}{12}|^2 \\
\mu_2 &= |\herm{21}{23}|^2 = 1 - |\herm{12}{23}|^2 \\
\mu_3 &= |\herm{31}{32}|^2 = 1 - |\herm{23}{31}|^2.
\end{align*}
To understand the first line in the previous set of formulas, note that $|\herm{12}{13}|^2 + |\herm{31}{12}|^2 = 1$ since $\mathbf{w}_{12}$ has unit norm and $(\mathbf{w}_{13}, \mathbf{w}_{31})$ is a unitary basis of $\mathbb{C}^2$. The other two lines are similar.

We now expand $\det(H_3(\mathbf{x}))$, obtaining
\[ \begin{split} & \det(H_3(\mathbf{x})) \\ 
	= \, &(1 + \mu_1)(1 + \mu_2)(1 + \mu_3) + 2(1 - \mu_1)(1 - \mu_2)(1 - \mu_3) - \cdots \\
 & \cdots -(1 + \mu_1)(1 - \mu_2)(1 - \mu_3) - (1 - \mu_1)(1 + \mu_2)(1 - \mu_3) - \cdots \\
 & \cdots -(1 - \mu_1)(1 - \mu_2)(1 + \mu_3) \end{split}
\]
Expanding and simplifying, we obtain
\begin{equation} \det(H_3(\mathbf{x})) = 4(\mu_1 \mu_2 + \mu_1 \mu_3 + \mu_2 \mu_3) - 4 \mu_1 \mu_2 \mu_3. \label{det-H3} \end{equation}
We consider the Gram matrix $G$ of the vectors $\nu_{23}$, $\nu_{31}$ and $\nu_{12}$ in $\mathbb{R}^3$, whose entries are the pairwise Euclidean inner products of these $3$ vectors. Using \eqref{2-cycle}, we find that
\[ G = \begin{pmatrix} 1 & 1 - 2 \mu_3 & 1 - 2 \mu_2 \\
	1 - 2 \mu_3 & 1 & 1 - 2 \mu_1 \\
	1 - 2 \mu_2 & 1 - 2 \mu_1 & 1 \end{pmatrix}. \]
But the $3$ vectors $\nu_{23}$, $\nu_{31}$ and $\nu_{12}$ are coplanar, so that $G$ has vanishing determinant. Hence
\[ \begin{split} 0 = \det(G) = \, 1 + 2(1-2\mu_1)(1-2\mu_2)(1-2\mu_3) &- (1-2\mu_1)^2 - \cdots \\
\cdots &-(1-2\mu_2)^2 -(1-2\mu_3)^2, \end{split} \]
which gives, after simplifying
\[ 0 = \det(G) = 8(\mu_1\mu_2 + \mu_1\mu_3 + \mu_2\mu_3) - 4(\mu_1^2 + \mu_2^2 + \mu_3^2) - 16\mu_1 \mu_2 \mu_3, \]
from which we obtain
\begin{equation}-4 \mu_1 \mu_2 \mu_3 = \mu_1^2 + \mu_2^2 + \mu_3^2 - 2(\mu_1 \mu_2 + \mu_1 \mu_3 + \mu_2 \mu_3). \label{identity} \end{equation}
Substituting the previous formula into \eqref{det-H3}, we finally obtain
\begin{equation} \det(H_3(\mathbf{x})) = (\mu_1 + \mu_2 + \mu_3)^2. \end{equation}
Note that the RHS of the previous formula is nonnegative, and cannot vanish. Indeed, if the RHS vanished, then each of $\mu_1$, $\mu_2$ and $\mu_3$ must vanish, which would imply that the triangle with vertices $\mathbf{x}_1$, $\mathbf{x}_2$ and $\mathbf{x}_3$ has all $3$ interior angles equal to $\pi$, which is clearly impossible (since the sum of the interior angles of a Euclidean triangle must be $\pi$).

Hence, we have proved that $\det(H_3(\mathbf{x})) > 0$. Thus $H_3(\mathbf{x})$ is an hermitian positive semidefinite matrix with positive determinant, from which we conclude that $H_3(\mathbf{x})$ is positive definite, for any $\mathbf{x} \in C_3(\mathbb{R}^3)$.

As a note, using \eqref{identification} and the fact that $D$ is real and positive if $n = 3$ (cf. \cite{Ati-Sut-2002}), we deduce that
\[ D = \frac{\mu_1 + \mu_2 + \mu_3}{2}, \]
(which is equivalent to formula (3.16) in \cite{Ati-Sut-2002}) if $n = 3$. Moreover, it is known that in this case, $D$ is minimized at a collinear configuration at which $D = 1$ and is maximized at an equilateral triangle at which $D = 9/8$ (also cf. \cite{Ati-2001},  \cite{Ati-Sut-2002}). We thus have, if $n = 3$, the following.
\begin{equation} 2 \leq \mu_1 + \mu_2 + \mu_3 \leq \frac{9}{4}, \label{bounds} \end{equation}
from which it follows that
\[ \det(H_3(\mathbf{x})) \geq 4. \]

Before leaving this section, we will prove the following lemma.

\begin{lemma} Given any $\mathbf{x} \in C_3(\mathbb{R}^3)$, $H_3(\mathbf{x}) - \mathbf{1}$ (where $\mathbf{1}$ here denotes the $3 \times 3$ identity matrix) is (hermitian) positive semidefinite.
\end{lemma}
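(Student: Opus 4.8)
The plan is to exhibit $H_3(\mathbf{x}) - \mathbf{1}$ explicitly as a Gram matrix, which would immediately force positive semidefiniteness. Looking at \eqref{H3}, the diagonal entries of $H_3 - \mathbf{1}$ are $|\herm{12}{13}|^2$, $|\herm{21}{23}|^2$, $|\herm{31}{32}|^2$, and the off-diagonal $(1,2)$ entry is $\herm{12}{23}\,\herm{13}{21} = \langle \mathbf{w}_{12}, \mathbf{w}_{23}\rangle \langle \mathbf{w}_{13}, \mathbf{w}_{21}\rangle$. The key observation I would pursue is that $p_i = \operatorname{Sym}_{j\neq i} p_{ij}$ is a symmetrized tensor of two unit vectors in $\mathbb{C}^2$, and for such a rank-$\le 2$ situation there is a clean orthogonal-type decomposition: writing each $p_i = \mathbf{w}_{ij}\odot\mathbf{w}_{ik}$, one can split off the ``diagonal'' part coming from the common pattern. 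Concretely, I would look for unit vectors (or explicit vectors) $q_1, q_2, q_3 \in \mathbb{C}^2$ with $\langle q_i, q_j\rangle$ matching the entries of $H_3 - \mathbf{1}$ up to the correct scalar, so that $H_3 - \mathbf{1}$ is the Gram matrix of $q_1, q_2, q_3$.

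The natural candidate: for each $i$, among the two factors of $p_i$ pick the one I want and project. Note $(\mathbf{w}_{13}, \mathbf{w}_{31})$ is a unitary basis of $\mathbb{C}^2$, and $\mathbf{w}_{12}$ decomposes along it; the coefficient of $\mathbf{w}_{31}$ in $\mathbf{w}_{12}$ is (up to phase) $\herm{12}{31}$, while the coefficient along $\mathbf{w}_{13}$ has modulus $|\herm{12}{13}|$. The entry $\herm{12}{23}\,\herm{13}{21}$ suggests pairing $\mathbf{w}_{12}$ (projected onto a suitable line) against $\mathbf{w}_{23}$, etc. So I would try to show that, after choosing phases appropriately, $H_3 - \mathbf{1} = (\langle \tilde q_i, \tilde q_j\rangle)$ where $\tilde q_i$ is, say, the component of $\mathbf{w}_{ij}$ ``orthogonal to the $p_i$ line'' suitably normalized — more precisely I would compute $\langle p_i, p_j\rangle$ in terms of the $\herm{\cdot}{\cdot}$ using the $S_2$-sum formula $\langle \mathbf{a}\odot\mathbf{b}, \mathbf{c}\odot\mathbf{d}\rangle = \langle\mathbf{a},\mathbf{c}\rangle\langle\mathbf{b},\mathbf{d}\rangle + \langle\mathbf{a},\mathbf{d}\rangle\langle\mathbf{b},\mathbf{c}\rangle$, confirm it reproduces \eqref{H3}, and then read off which Gram matrix the ``$-\mathbf{1}$'' leaves behind. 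An alternative, more computational route is to simply verify directly that $H_3 - \mathbf{1}$ has nonnegative leading principal minors: the $1\times 1$ minors are squared moduli, and the $2\times2$ minor of rows/columns $1,2$ is $|\herm{12}{13}|^2|\herm{21}{23}|^2 - |\herm{12}{23}|^2|\herm{13}{21}|^2$, which I would bound using the unitary-basis relations among the $\herm{\cdot}{\cdot}$ (each $|\herm{ij}{ik}|^2 + |\herm{ij}{\overline{ik}}|^2 = 1$ type identity) together with a Cauchy–Schwarz/Gram argument in $\mathbb{C}^2$; the $3\times3$ minor is $\det H_3 - (\text{sum of }2\times2\text{ principal minors}) + (\text{sum of diagonal}) - 1$, and since $\det H_3 = (\mu_1+\mu_2+\mu_3)^2$ is already computed, this reduces to an identity in $\mu_1,\mu_2,\mu_3$ that can be closed using \eqref{identity}.

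I expect the main obstacle to be the $2\times2$ principal minors: unlike the determinant of $H_3$ itself, these involve cross terms $\herm{12}{23}$, $\herm{13}{21}$ that are not among the $\mu_k$ and whose moduli are constrained only indirectly (via the coplanarity of $\nu_{12},\nu_{23},\nu_{31}$ and the unitary-basis identities). The cleanest way past this is almost certainly the Gram-matrix realization rather than brute-force minors: if I can produce vectors $\tilde q_i\in\mathbb{C}^2$ (or in some $\operatorname{Sym}$ space) whose inner products are exactly the entries of $H_3-\mathbf{1}$, positive semidefiniteness is automatic and no inequality chasing is needed. So my first move would be to expand $\langle p_i,p_j\rangle$ via the $S_2$ formula, isolate the term that equals $\langle\mathbf{w}_{ij},\mathbf{w}_{ij}\rangle\langle\mathbf{w}_{ik},\mathbf{w}_{ik}\rangle = 1$ on the diagonal, and check that the remaining part is itself a Gram matrix of single vectors in $\mathbb{C}^2$ — I anticipate the surviving vectors will be $\mathbf{w}_{ij}\langle\mathbf{w}_{ik},\cdot\rangle$-type projections, and that after choosing the phase conventions consistent with the antipodal/quaternionic prescription $\mathbf{w}_{ji} = (-\bar v_{ij}, \bar u_{ij})^T$ everything lines up.
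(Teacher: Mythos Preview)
Your primary plan --- to exhibit $H_3(\mathbf{x})-\mathbf{1}$ as the Gram matrix of three vectors $\tilde q_i\in\mathbb{C}^2$ --- cannot work. A Gram matrix of vectors in $\mathbb{C}^2$ has rank at most $2$, hence zero determinant; but (as your own alternative computation would reveal, and as the paper shows) one gets
\[
\det\bigl(H_3(\mathbf{x})-\mathbf{1}\bigr)=(S-2)(S-1),\qquad S=\mu_1+\mu_2+\mu_3\in[2,\tfrac{9}{4}],
\]
which is strictly positive for every non-collinear triple. So the ``surviving part'' after stripping the diagonal $1$ is generically of full rank $3$ and cannot come from $\mathbb{C}^2$. (Allowing ``some $\operatorname{Sym}$ space'' does not help unless you can name the vectors explicitly; otherwise you are just restating positive semidefiniteness. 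The index pattern of the off-diagonal products $h_{12,23}h_{13,21}$, $h_{12,31}h_{13,32}$, $h_{21,32}h_{23,31}$ does not separate into a Hadamard or tensor product of two honest Gram matrices either.)

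Your fallback --- verifying the principal minors directly --- is exactly the paper's proof, and it is easier than you fear. The $2\times2$ minors are \emph{not} the obstacle: the moduli you worry about \emph{are} expressible in the $\mu_k$ via precisely the unitary-basis identities you already flagged. Since $(\mathbf{w}_{12},\mathbf{w}_{21})$ is an orthonormal basis of $\mathbb{C}^2$, one has $|\herm{12}{23}|^2=1-|\herm{21}{23}|^2=1-\mu_2$ and $|\herm{13}{21}|^2=1-|\herm{12}{13}|^2=1-\mu_1$, whence
\[
m_{12}=\mu_1\mu_2-(1-\mu_1)(1-\mu_2)=\mu_1+\mu_2-1\ \ge\ 1-\mu_3\ \ge\ 0
\]
using $S\ge2$. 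The $3\times3$ determinant then expands (with \eqref{identity}) to $2-3S+S^2=(S-2)(S-1)\ge0$. So drop the Gram-realization hunt and run the minor computation; every modulus that appears reduces to the $\mu_i$.
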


\begin{proof}
Consider
\begin{equation} H_3(\mathbf{x}) - \mathbf{1} = 
\begin{pmatrix} |\herm{12}{13}|^2 & \herm{12}{23} \,\herm{13}{21} & \herm{12}{31} \,\herm{13}{32} \\
\herm{23}{12} \,\herm{21}{13}& |\herm{21}{23}|^2 & \herm{21}{32} \,\herm{23}{31} \\
\herm{31}{12} \,\herm{32}{13} & \herm{32}{21} \,\herm{31}{23} & |\herm{31}{32}|^2 \end{pmatrix}. \label{H3-minus-1} \end{equation}

It is clear that all the entries on the diagonal are nonnegative. Let us consider the leading principal $2 \times 2$ minor $m_{12}$ of the above matrix, given by
\begin{align*} m_{12} &= |\herm{12}{13}|^2 \,|\herm{21}{23}|^2 - |\herm{12}{23}|^2 \,|\herm{13}{21}|^2. \\
&= \mu_1 \mu_2 - (1 - \mu_2) (1 - \mu_1) \\
&= \mu_1 + \mu_2 - 1 \\
&\geq 1 - \mu_3 \\
&\geq 0,
\end{align*}
where we have used \eqref{bounds}. We can similarly prove that the other principal $2 \times 2$ minors of $H_3(\mathbf{x}) - \mathbf{1}$, namely $m_{13}$ and $m_{23}$, are also nonnegative. It remains only to show that the determinant of $H_3(\mathbf{x}) - \mathbf{1}$ is also nonnegative. Expanding, we have
\[ \begin{split}
& \det(H_3(\mathbf{x}) - \mathbf{1}) \\
= \, & \mu_1 \mu_2 \mu_3 + 2(1 - \mu_2)(1 - \mu_1)(1 - \mu_3) - \mu_1(1 - \mu_2)(1 - \mu_3) - \cdots \\
& \cdots - \mu_2(1 - \mu_1)(1 - \mu_3) - \mu_3(1 - \mu_1)(1 - \mu_2),
\end{split}
\]
where we have used that
\begin{align*} \herm{21}{32} &= \overline{\herm{12}{23}} \\
\herm{13}{21} &= - \overline{\herm{31}{12}} \\
\herm{32}{13} &= - \overline{\herm{23}{31}}.
\end{align*}
To understand the minus signs in the last $2$ equations, note that we are using the convention that if $1 \leq i < j \leq n$, then $w_{ji}$ is the quaternionic structure of $\mathbb{C}^2$ applied to $w_{ij}$ (see the paragraph in section \ref{At-det} preceding the statement of Conjecture 2). In particular, this implies that $w_{ij}$ is minus the quaternionic structure applied to $w_{ji}$. We note also that this quaternionic structure is anti-unitary with respect to the standard hermitian inner product on $\mathbb{C}^2$, which explains the appearance of complex conjugation in the above formulas.

Expanding and simplifying, we obtain
\[ \begin{split} & \det(H_3(\mathbf{x}) - \mathbf{1}) \\
= & \,2 - 3(\mu_1 + \mu_2 + \mu_3) + 4(\mu_1\mu_2 + \mu_1 \mu_3 + \mu_2 \mu_3) - 4 \mu_1 \mu_2 \mu_3.
\end{split}
\]
We now invoke \eqref{identity}, thus obtaining
\[ \det(H_3(\mathbf{x}) - \mathbf{1}) = 2 - 3(\mu_1 + \mu_2 + \mu_3) + (\mu_1 + \mu_2 + \mu_3)^2.\]
Factoring out the previous equation, we get
\begin{equation} \det(H_3(\mathbf{x}) - \mathbf{1}) = (\mu_1 + \mu_2 + \mu_3 - 2) (\mu_1 + \mu_2 + \mu_3 - 1),
\end{equation}
which is nonnegative, since $\mu_1 + \mu_2 + \mu_3 \geq 2$, from \eqref{bounds}. This finishes the proof of the lemma.
\end{proof}

\section{Proof of the $n = 4$ case}

Just as in the previous section, we make use of the notation
\[ \herm{ij}{kl} = \langle \mathbf{w}_{ij},\, \mathbf{w}_{kl} \rangle \]
($1 \leq i, j, k, l \leq 4$).

As a warm-up, we first expand
\[ \begin{split} & \langle p_1,\, p_1 \rangle \\
= \, & 1 + \herm{12}{13}\,\herm{13}{14}\,\herm{14}{12} +
\herm{13}{12}\,\herm{14}{13}\,\herm{12}{14}
- |\herm{12}{13}|^2 - |\herm{12}{14}|^2 - |\herm{13}{14}|^2.
\end{split} 
\]

It is well known that this hermitian inner product can be written as the permanent of a $3 \times 3$ matrix. More specifically, consider the Gram matrix $T_{11}$ of $(\mathbf{w}_{12}, \mathbf{w}_{13}, \mathbf{w}_{14})$, defined by
\[T_{11} = \begin{pmatrix} 1 & \herm{12}{13} & \herm{12}{14} \\
	\herm{13}{12} & 1 & \herm{13}{14} \\
	\herm{14}{12} & \herm{14}{13} & 1 \end{pmatrix}. \]
Then $\langle p_1,\, p_1 \rangle = \operatorname{perm}(T_{11})$. But $\mathbf{w}_{12}$, $\mathbf{w}_{13}$ and $\mathbf{w}_{14}$ are $3$ vectors in $\mathbb{C}^2$, which must thus be linearly dependent (over $\mathbb{C}$). Hence their Gram matrix $T_{11}$ is singular. We therefore have
\[ \det(T_{11}) = 0.\]
Using the previous formula, we deduce the following.
\begin{equation}\langle p_1,\, p_1 \rangle = 2(|\herm{12}{13}|^2 + |\herm{12}{14}|^2 + |\herm{13}{14}|^2). \label{p1-p1}\end{equation}

Using a similar approach, we compute $\langle p_1,\, p_2 \rangle$, which is the permanent of the following $3 \times 3$ matrix
\[T_{12} = \begin{pmatrix} 0 & \herm{12}{23} & \herm{12}{24} \\
	\herm{13}{21} & \herm{13}{23} & \herm{13}{24} \\
	\herm{14}{21} & \herm{14}{23} & \herm{14}{24} \end{pmatrix}. \]
But $\mathbf{w}_{12}$, $\mathbf{w}_{13}$ and $\mathbf{w}_{14}$ are $3$ vectors in $\mathbb{C}^2$, which must thus be linearly dependent (over $\mathbb{C}$), so that $T_{11}$, which is the ``mixed'' Gram matrix between $(\mathbf{w}_{12}, \mathbf{w}_{13}, \mathbf{w}_{14})$ and $(\mathbf{w}_{21}, \mathbf{w}_{23}, \mathbf{w}_{24})$, must be singular. Hence
\[ \det(T_{12}) = 0. \]
Using the previous formula, we obtain that
\begin{equation} \langle p_1,\, p_2 \rangle = \operatorname{per}(T_{12}) = 2(\herm{12}{23}\, \herm{13}{21}\, \herm{14}{24} + \herm{12}{24}\, \herm{13}{23}\, \herm{14}{21}).\label{p1-p2}\end{equation}
There are formulas similar to \eqref{p1-p1} and \eqref{p1-p2} for any $\langle p_i,\, p_j \rangle$.
Let $H_{123}(\mathbf{x})$ be the Gram matrix of the configuration $\mathbf{x}_1$, $\mathbf{x}_2$ and $\mathbf{x}_3$ (i.e. we delete $\mathbf{x}_4$ from the original configuration). Hence $H_{123}(\mathbf{x}) - \mathbf{1}$, where $\mathbf{1}$ is the $3 \times 3$ identity matrix, is given by the RHS of \eqref{H3-minus-1}.

If $\mathbf{w}_i \in \mathbb{C}^2$, for $i = 1, \ldots, 3$, we define their Gram matrix to be
\[ H(\mathbf{w}_1, \mathbf{w}_2, \mathbf{w}_3) = (\langle \mathbf{w}_i,\, \mathbf{w}_j \rangle) \]
($1 \leq i,j \leq 3$), which is a singular hermitian positive semidefinite matrix.

We now define
\[ A_4(\mathbf{x}) = 
\begin{pmatrix} \tilde{A}_4 & \mathbf{0} \\
\mathbf{0}^T & 0 \end{pmatrix}, \]
where
\[ \tilde{A}_4 = 2(H_{123} - \mathbf{1}) * H(\mathbf{w}_{14}, \mathbf{w}_{24}, \mathbf{w}_{34}),\]
with $*$ denoting the Hadamard product of matrices and where we have omitted the dependence on $\mathbf{x}$ from our notation, for brevity. It is known that the Hadamard product of two hermitian positive semidefinite matrices is also hermitian positive semidefinite, from which we deduce that $\tilde{A}_4$, and thus also $A_4$, is hermitian positive semidefinite.

Similarly, if $1 \leq i \leq 3$, we can define $\tilde{A}_i$ in a similar fashion. More precisely, we define
\begin{align*}
\tilde{A}_1 &= 2(H_{234} - \mathbf{1}) * H(\mathbf{w}_{21}, \mathbf{w}_{31}, \mathbf{w}_{41}) \\
\tilde{A}_2 &= 2(H_{134} - \mathbf{1}) * H(\mathbf{w}_{12}, \mathbf{w}_{32}, \mathbf{w}_{42}) \\
\tilde{A}_3 &= 2(H_{124} - \mathbf{1}) * H(\mathbf{w}_{13}, \mathbf{w}_{23}, \mathbf{w}_{43})
\end{align*}
For example $H_{234}$ is the $3 \times 3$ Gram matrix of the configuration $\mathbf{x}$ from which we have deleted the point $\mathbf{x}_1$.

We let $A_i$ ($1 \leq i \leq 3$) be the $4 \times 4$ matrix having $\tilde{A}_i$ as its $3 \times 3$ principal submatrix with row and column indices taken from $\{1, 2, 3, 4\} \setminus \{i\}$ and having zeros everywhere else.

Just as for $\tilde{A}_4$ and $A_4$, it is clear that the $\tilde{A}_i$ and $A_i$ ($1 \leq i \leq 3$) are all hermitian positive semidefinite.

By examining carefully the formulas for the $\langle p_i,\, p_j \rangle$ ($1 \leq i,j \leq 4$), for which \eqref{p1-p1} and \eqref{p1-p2} form a representative sample, one deduces the following fundamental decomposition for $H_4(\mathbf{x})$,

\begin{equation} H_4(\mathbf{x}) = \sum_{i = 1}^4 A_i(\mathbf{x}). \label{decomposition}\end{equation}

We now focus our attention onto $\tilde{A}_4(\mathbf{x})$, as we would like to know for which $\mathbf{x} \in C_4(\mathbb{R}^3)$ it is positive definite, which amounts to checking for which $\mathbf{x} \in C_4(\mathbb{R}^3)$ the determinant of $\tilde{A}_4(\mathbf{x})$ is positive.

We introduce some notation:
\begin{align*}
&\mu_1 = |\herm{12}{13}|^2, &\mu_2 = |\herm{21}{23}|^2, \qquad &\mu_3 = |\herm{31}{32}|^2, \\
&\rho_1 = |\herm{24}{34}|^2, &\rho_2 = |\herm{34}{14}|^2, \qquad &\rho_3 = |\herm{14}{24}|^2.
\end{align*}
We also let
\[ \tilde{\mu}_i = 1 - \mu_i \quad \text{and} \quad \tilde{\rho}_i = 1 - \rho_i \qquad \text{($1 \leq i \leq 3$)}. \]
Note that $\mu_i$, $\rho_i$, $\tilde{\mu}_i$ and $\tilde{\rho}_i$ ($1 \leq i \leq 3$) all lie in $[0, 1]$.

Using \eqref{3-cycle}, we get that
\[ \Re(\herm{14}{24}\,\herm{24}{34}\,\herm{34}{14}) = \frac{1}{2}(-1 + T),\]
where $T = \rho_1 + \rho_2 + \rho_3$.

Expanding, we get
\[\begin{split} & \det(\tilde{A}_4/2) \\
= & \,\mu_1 \mu_2 \mu_3 + \tilde{\mu}_1 \tilde{\mu}_2 \tilde{\mu}_3(-1 + T) - \tilde{\mu}_1 \tilde{\mu}_2 \mu_3 \rho_3 - \tilde{\mu}_1 \mu_2 \tilde{\mu}_3 \rho_2 - \mu_1 \tilde{\mu}_2 \tilde{\mu}_3 \rho_1.
\end{split}
\]
Letting $\widetilde{T} = 3 - T \geq 0$, we can rewrite the previous equation as follows.
\[\begin{split} & \det(\tilde{A}_4/2) \\
= & \,\mu_1 \mu_2 \mu_3 + 2\,\tilde{\mu}_1 \tilde{\mu}_2 \tilde{\mu}_3 - \tilde{\mu}_1 \tilde{\mu}_2 \mu_3 - \tilde{\mu}_1 \mu_2 \tilde{\mu}_3 - \mu_1 \tilde{\mu}_2 \tilde{\mu}_3 - \tilde{\mu}_1 \tilde{\mu}_2 \tilde{\mu}_3 \widetilde{T} + \cdots \\
& \qquad \qquad \cdots + \tilde{\mu}_1 \tilde{\mu}_2 \mu_3 \tilde{\rho}_3 + \tilde{\mu}_1 \mu_2 \tilde{\mu}_3 \tilde{\rho}_2 + \mu_1 \tilde{\mu}_2 \tilde{\mu}_3 \tilde{\rho}_1 \\
= & \, \det(H_{123}(\mathbf{x}) - \mathbf{1}) - \tilde{\mu}_1 \tilde{\mu}_2 \tilde{\mu}_3\widetilde{T} + \tilde{\mu}_1 \tilde{\mu}_2 \mu_3 \tilde{\rho}_3 + \tilde{\mu}_1 \mu_2 \tilde{\mu}_3 \tilde{\rho}_2 + \mu_1 \tilde{\mu}_2 \tilde{\mu}_3 \tilde{\rho}_1
\end{split}
\]
Let $S = \mu_1 + \mu_2 + \mu_3$. It can be shown, after expanding and using \eqref{identity}, that
\begin{equation}
4 \tilde{\mu}_1 \tilde{\mu}_2 \tilde{\mu}_3 = (S - 2)^2.
\label{identity2}
\end{equation}
Using \eqref{H3-minus-1} and \eqref{identity2}, we obtain
\[\begin{split} & \frac{1}{2}\det(\tilde{A}_4) \\
= & \, 4(S - 2)(S - 1) - (S - 2)^2 \widetilde{T} + 4(\tilde{\mu}_1 \tilde{\mu}_2 \mu_3 \tilde{\rho}_3 + \tilde{\mu}_1 \mu_2 \tilde{\mu}_3 \tilde{\rho}_2 + \mu_1 \tilde{\mu}_2 \tilde{\mu}_3 \tilde{\rho}_1).
\end{split}
\]
But $\widetilde{T} = 3 - T$, so we obtain
\begin{equation} \frac{1}{2}\det(\tilde{A}_4) = (S - 2)(S + 2) + R, \label{main-eq} \end{equation}
where
\begin{equation} R = (S - 2)^2 T + 4(\tilde{\mu}_1 \tilde{\mu}_2 \mu_3 \tilde{\rho}_3 + \tilde{\mu}_1 \mu_2 \tilde{\mu}_3 \tilde{\rho}_2 + \mu_1 \tilde{\mu}_2 \tilde{\mu}_3 \tilde{\rho}_1) \geq 0.
\label{main-eq2} \end{equation}
But we have seen that, from the previous section (see \eqref{bounds}), that $S \geq 2$ and that equality is attained if $\mathbf{x}_1$, $\mathbf{x}_2$ and $\mathbf{x}_3$ are collinear. We also claim that equality is attained \emph{only if} $\mathbf{x}_1$, $\mathbf{x}_2$ and $\mathbf{x}_3$ are collinear. This can be seen as follows. If $\alpha$, $\beta$ and $\gamma$ are the interior angles of the triangle with vertices $\mathbf{x}_1$, $\mathbf{x}_2$ and $\mathbf{x}_3$, we then have
\[ S - 2 = \operatorname{cos}^2\left(\frac{\alpha}{2}\right) + \operatorname{cos}^2\left(\frac{\beta}{2}\right) + \operatorname{cos}^2\left(\frac{\gamma}{2}\right) - 2, \]
which can be manipulated using some trigonometric identities and shown to yield
\[ S - 2 = 2 \,\operatorname{sin}\left(\frac{\alpha}{2}\right)\, \operatorname{sin}\left(\frac{\beta}{2}\right)\, \operatorname{cos}\left(\frac{\alpha+\beta}{2}\right) \geq 0, \]
since $0 \leq \frac{\alpha}{2}, \frac{\beta}{2}, \frac{\gamma}{2} \leq \frac{\pi}{2}$ and $\alpha + \beta = \pi - \gamma$. We can also see that $S - 2$ vanishes iff one of the interior angles ($\alpha$, $\beta$ and $\gamma$) vanishes, i.e. iff $\mathbf{x}_1$, $\mathbf{x}_2$ and $\mathbf{x}_3$ are collinear.

Going back to \eqref{main-eq} and \eqref{main-eq2}, we easily see that, if $\mathbf{x}_1$, $\mathbf{x}_2$ and $\mathbf{x}_3$ are collinear, then $S = 2$ and $R = 0$, so that $\det(\tilde{A}_4) = 0$, while if they are not collinear, then $\det(\tilde{A}_4) > 0$.

In other words, $\tilde{A}_4$ is (hermitian) positive definite iff the points $\mathbf{x}_1$, $\mathbf{x}_2$ and $\mathbf{x}_3$ are \emph{not} collinear.

We now go back to our main argument. If the $4$ points $\mathbf{x}_1, \ldots, \mathbf{x}_4$ are collinear, then it can be verified directly that $H_4(\mathbf{x})$ is positive definite.

We then assume that $\mathbf{x}_1, \ldots, \mathbf{x}_4$ are not collinear. It is therefore possible to remove one of the four points and still get a non-collinear configuration, so we assume, WLOG, that $\mathbf{x}_1$, $\mathbf{x}_2$ and $\mathbf{x}_3$ are not collinear. There is also a $3$-subset of the configuration of four points $\mathbf{x}$ containing $\mathbf{x}_4$ which is also not collinear. We thus assume, WLOG, that $\mathbf{x}_2$, $\mathbf{x}_3$ and $\mathbf{x}_4$ are not collinear.

Let $v \in \mathbb{C}^4$ such that
\[ \langle H_4(\mathbf{x}) v,\, v \rangle = 0, \]
where $\langle -,\, - \rangle$ now denotes the standard hermitian inner product on $\mathbb{C}^4$. Using \eqref{decomposition}, we obtain
\[ 0 = \sum_{i=1}^4 \langle A_i(\mathbf{x}) v, \, v \rangle. \]
Since the $A_i$ ($1 \leq i \leq 4$) are hermitian positive semidefinite, we therefore have
\[ \langle A_i(\mathbf{x}) v, \, v \rangle = 0, \quad \text{for $i = 1, \dots, 4$}. \]
Since $\mathbf{x}_1$, $\mathbf{x}_2$ and $\mathbf{x}_3$ are not collinear (using our assumptions above), we thus have that $\tilde{A}_4$ is positive definite. Moreover, since we have, in addition, that
\[ \langle A_4(\mathbf{x}) v, \, v \rangle = 0, \]
we therefore deduce that the first $3$ components of $v$ are $0$. Similarly, since $\mathbf{x}_2$, $\mathbf{x}_3$ and $\mathbf{x}_4$ are also assumed to be non-collinear, we therefore obtain that the last $3$ components of $v$ are $0$. Hence $v$ vanishes. We have therefore finished the proof that $H_4(\mathbf{x})$ is hermitian positive definite for any $\mathbf{x} \in C_4(\mathbb{R}^3)$.

\section{Future work}

In a future work, the author would like to attempt to apply this method to the general $n > 4$ case and see how far he would get.

\section*{Acknowledgements}

The author would like to thank Vivecca for her love and support. He also wishes to thank Dennis Sullivan, Peter Olver and Niky Kamran for their patience, after having received many emails about this problem from him, and for their moral support. Many thanks also go to Stephen Drury for some interesting discussions about the permanent of a matrix and various related inequalities. The author would also like to thank Paul Cernea for listening to his ideas and asking relevant questions.

\end{document}